\newtheorem{theorem}{Theorem}[section]
\newtheorem{lemma}[theorem]{Lemma}
\newtheorem{proposition}[theorem]{Proposition}
\newtheorem{conjecture}[theorem]{Conjecture}
\theoremstyle{definition}
\newtheorem*{ack}{Acknowledgements}
\newtheorem*{con}{Conventions}
\newtheorem{remark}[theorem]{Remark}
\newtheorem{definition}[theorem]{Definition}
\numberwithin{equation}{section} \numberwithin{figure}{section}
\DeclareMathOperator{\Spec}{Spec}
\DeclareMathOperator{\an}{an}
\newcommand\QQ{\mathbb{Q}}
\newcommand\CC{\mathbb{C}}
\definecolor{orange}{rgb}{1,0.5,0}
\title[Rational points and ramified covers of products of two elliptic curves]{Rational points and ramified covers of products of two elliptic curves}
\author{Ariyan Javanpeykar}
\address{Ariyan Javanpeykar \\
Institut f\"{u}r Mathematik\\
Johannes Gutenberg-Universit\"{a}t Mainz\\
Staudingerweg 9, 55099 Mainz\\
Germany.}
\email{peykar@uni-mainz.de}
\subjclass[2010]
{14G99 
(11G35,  
14G05,  
32Q45)} 
\keywords{Integral points,  special varieties,  abelian varieties, Hilbert property, Hilbert's irreducibility theorem}
\begin{document}

 \begin{abstract}  
 Corvaja and Zannier conjectured that an abelian variety  over a number field  satisfies a   version of the Hilbert property. We  investigate their conjecture for products of  elliptic curves using Kawamata's structure result for ramified covers of abelian varieties, and Faltings's finiteness theorem for rational points on higher genus curves. 
\end{abstract}

\maketitle

\thispagestyle{empty}

  \section{Introduction}
   Corvaja and Zannier conjectured that an abelian variety  over a number field  satisfies a modified version of the Hilbert property. We  investigate their conjecture for products of  elliptic curves using Kawamata's structure result for ramified covers of abelian varieties, and Faltings's finiteness theorem for rational points on higher genus curves.

  Recall that a normal integral variety $X$ over a field $k$ satisfies the Hilbert property over $k$ (as defined in \cite[\S 4]{Serre}) if, for every positive integer $n$ and every collection of finite surjective morphisms $\pi_i:Y_i\to X$, $1\leq i\leq n$, with $Y_i$ geometrically integral over $k$ and $\deg \pi_i \geq 2$, the set $X(k)\setminus \cup_{i=1}^n \pi_i(Y_i(k))$ is dense in $X$. In particular, if $X$ satisfies the Hilbert property over $k$, then $X(k)$ is dense. The Hilbert property   is closely related to the inverse Galois problem for $\QQ$; see \cite[\S4]{Serre}. In this paper we study a \emph{modified} version of the Hilbert property, motivated by conjectures of Campana and Corvaja-Zannier on rational points for varieties over number fields.

By Hilbert's Irreducibility Theorem \cite[Theorem~3.4.1]{Serre}, a rational variety   over a number field satisfies the Hilbert property. On the other hand,  an abelian variety over a number field does not satisfy the Hilbert property.  Nonetheless,  despite the failure of the Hilbert property for abelian varieties, Lang's conjecture on rational points of pseudo-hyperbolic varieties (see \cite{Lang1}) predicts that abelian varieties should satisfy  a \emph{modified} version of the Hilbert property.  

The aim of this paper is to investigate such modified   Hilbert properties for products of elliptic curves. We start with the ``very-weak-Hilbert property''.  This notion is obtained by restricting oneself, in the definition of the Hilbert property (see \cite[\S 3]{Serre}), to ramified covers and to  ``single'' covers.
  
  \begin{definition}
  Let $k$ be a field. A   normal projective geometrically connected variety  $X$ over   $k$ satisfies the \emph{very-weak-Hilbert property   over $k$} if, for every   finite surjective ramified morphism $\pi:Y\to X$ with $Y$ geometrically integral and normal, the set
  $
  X(k) \setminus  \pi(Y(k))
  $ is dense in $X$.
  \end{definition}
  
 If $k$ is a finitely generated field of characteristic zero and $A$ is an abelian variety over $k$, then the Mordell-Weil and Lang-N\'eron theorem imply that $A(k)$ is a finitely generated abelian group; see \cite[Corollary~7.2]{ConradTrace}.  
We prove that the product $\prod_{i=1}^n E_i$ of elliptic curves $E_1,\ldots, E_n$ over a number field  satisfies the very-weak-Hilbert property under the (necessary) assumption that the rank of each   $E_i(k)$ is positive.

\begin{theorem}\label{thm2} Let $k$ be a finitely generated field of characteristic zero, and let $E_1, \ldots, E_n$ be elliptic curves over $k$ with positive rank over $k$. Then $\prod_{i=1}^n E_i$  satisfies the very-weak-Hilbert property over $k$.
\end{theorem}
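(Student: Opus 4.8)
The plan is to reduce the statement to a fibration of $A:=\prod_{i=1}^n E_i$ into elliptic curves and then to play Faltings's finiteness theorem on the fibral covers against the positive rank of the base. Since each $E_i(k)$ is infinite, the group $A(k)=\prod_i E_i(k)$ is dense in $A$, so the real task is to exhibit, in every nonempty open subset, a rational point lying outside $\pi(Y(k))$. Throughout I fix a finite surjective ramified morphism $\pi\colon Y\to A$ with $Y$ normal and geometrically integral, and I write $R\subset Y$ for its ramification divisor and $B:=\pi(R)\subset A$ for the branch divisor; by purity of the branch locus, $B$ is a nonzero effective divisor.

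The first step is to choose a good elliptic fibration from the structure of $B$. By Kawamata's structure result (equivalently, the classical description of effective divisors on abelian varieties), $B$ is the pullback $q^{*}\bar B$ of an ample divisor $\bar B$ along the quotient $q\colon A\to \bar A:=A/A_0$ by the connected component $A_0$ of the stabilizer $\{a\in A: a+B=B\}$, and $A_0$ is a \emph{proper} abelian subvariety of $A$. In particular, some factor $E_{i_0}$ is not contained in $A_0$. I then fiber $A$ by this factor: let $\phi\colon A\to A':=\prod_{i\neq i_0}E_i$ be the projection, whose fibers are translates of $E_{i_0}$. Because $q|_{E_{i_0}}$ is an isogeny onto an elliptic curve $\bar E\subset\bar A$ and $\bar B$ is ample, the intersection number $B\cdot F=\bar B\cdot q_{*}F$ is strictly positive for every fiber $F$ of $\phi$; hence $B$ meets every fiber and $\phi(B)=A'$.

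The core of the argument then controls the fibral covers $\pi^{-1}(F)\to F$. Take the Stein factorization $Y\xrightarrow{g} W\xrightarrow{h} A'$ of $\phi\circ\pi$, so that $W$ is integral, $h$ is finite, and the general fiber of $g$ is an integral curve. Since $B$ meets every fiber, $\psi:=\phi\circ\pi$ satisfies $\psi(R)=A'$, so $R\to A'$ is dominant and hence so is $g|_R\colon R\to W$; as $\dim R=\dim W$, this map is generically finite, so for a general closed point $w\in W$ the fiber $g^{-1}(w)$ meets $R$ and therefore maps to the corresponding elliptic fiber $F$ with genuine ramification. By Riemann--Hurwitz such a component has genus at least $2$. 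Consequently, for every $a'$ in a dense open $U\subseteq A'$ all components of $\pi^{-1}(F_{a'})$ have genus at least $2$, so Faltings's theorem forces $\pi^{-1}(F_{a'})(k)$, and hence $\pi(Y(k))\cap F_{a'}$, to be finite. Since $F_{a'}\cong E_{i_0}$ has positive rank, $F_{a'}(k)\setminus\pi(Y(k))$ is cofinite in the infinite set $F_{a'}(k)$, thus dense in $F_{a'}$. Finally $U(k)$ is dense in $A'$ (because $A'(k)$ is dense and $U$ is open dense), and any nonempty open subset of $A$ meets one of these good fibers in a nonempty open set, so gluing over $U(k)$ shows that $A(k)\setminus\pi(Y(k))$ is dense.

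The step I expect to be the main obstacle is precisely the claim that, over a dense family of fibers, \emph{every} component of $\pi^{-1}(F_{a'})$ is ramified of genus at least $2$. A single persistently unramified component would be a genus-one \'etale cover of $F_{a'}$, to which Faltings does not apply and whose image of rational points could a priori exhaust $F_{a'}(k)$; ruling such components out directly would require an arithmetic argument on the relevant Mordell--Weil groups. What saves the argument is the non-degeneracy of $B$ furnished by Kawamata's theorem: the ampleness of $\bar B$ is exactly what makes $B$ meet every fiber of $\phi$, forcing $R$ to dominate the base and hence every fibral component to be ramified. Two further points to check carefully are the irreducibility and reducedness of the general fiber of $g$, so that the Riemann--Hurwitz bound may be applied component by component, and the validity of Faltings's finiteness theorem over the finitely generated field $k$, which holds in characteristic zero.
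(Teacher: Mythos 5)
Your proof is correct, and it shares the paper's overall skeleton --- fiber $A=\prod E_i$ over the product $A'$ of all factors but one, show the branch locus dominates $A'$, Stein-factorize, observe that the fibral covers are ramified covers of elliptic curves and hence curves of genus at least two, and play Faltings against the positive rank of $E_{i_0}$ --- but it establishes the key domination step by a genuinely different route. The paper proves (Lemma~\ref{lem:dom0} and Lemma~\ref{lem:dom}) by an elementary induction that any codimension-one closed subscheme of a product of smooth projective geometrically integral varieties surjects onto the product of all but one factor; this uses nothing about abelian varieties and is what lets the paper package the whole argument as a general product theorem (Theorem~\ref{thm3}), with Faltings fed in only at the end. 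You instead invoke the classical structure of effective divisors on an abelian variety (descent to an ample divisor on the quotient by the connected stabilizer, plus a projection-formula intersection computation); note that this is Mumford's divisor theory rather than Kawamata's covering theorem, which the paper reserves for Theorem~\ref{thm33}. What your route buys is an induction-free argument: working in characteristic zero you may restrict to the \'etale locus of $h\colon W\to A'$ and to the locus where the fibers of $g$ are smooth and integral, so you never need the base to satisfy a Hilbert-type property, only that $U(k)$ is dense --- which positive rank of every factor supplies. The paper's case distinction (Proposition~\ref{prop1}, for when the Stein factor $T\to S$ is ramified) and its induction on $n$ exist precisely because Theorem~\ref{prop:thm} is stated over an arbitrary field and for arbitrary factors. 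The points you flag are genuine but all standard: integrality of the general fiber of $g$ follows from the Stein factorization and normality of $Y$ in characteristic zero; persistence of ramification on an individual fibral component $C$ holds because, over the \'etale locus of $h$, $C$ is open and closed in $Y\times_A F$, so $\Omega_{C/F}=\Omega_{Y/A}|_C$ (this is exactly the paper's Lemma~\ref{lemmatje}); and Faltings does hold over finitely generated fields of characteristic zero, as cited in the paper. Finally, your worry about a ``persistently unramified component'' can be dispatched even more cleanly than you do: $g(R)$ is closed (by properness) and surjects onto $A'$ under the finite map $h$, hence $g(R)=W$, so \emph{every} fiber of $g$ meets $R$ and no genericity is needed for that particular step.
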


  We were first led to investigate the very-weak-Hilbert property for abelian varieties by the work of Corvaja-Zannier on the Hilbert property for the Fermat K3 surface \cite{CZHP}, the work of Coccia on the ``affine'' Hilbert property \cite{Coccia},   Demeio's extensions of Corvaja-Zannier's work \cite{Demeio1, Demeio2},   Streeter's verification of the Hilbert property for certain del Pezzo surfaces \cite{Streeter}, and Zannier's seminal work on Hilbert's irreducibility theorem for powers of elliptic curves \cite{ZannierDuke}.
  
 Let us recall that  in \cite{CZHP} Corvaja and Zannier introduced the following  modified version of the Hilbert property.
  
  \begin{definition}[Corvaja-Zannier] Let $k$ be a field. A   normal projective geometrically connected variety  $X$ over   $k$ satisfies the \emph{weak-Hilbert property   over $k$} if, for every integer $n\geq 1$ and finite surjective ramified morphisms $\pi_i:Y_i\to X$ with $Y_i$ a geometrically integral   normal variety over $k$ ($i=1,\ldots, n$), the set
  \[
  X(k) \setminus \cup_{i=1}^n \pi_i(Y_i(k))
  \] is dense in $X$.
  \end{definition}
  
   Our second result is that the product of two elliptic curves with positive rank satisfies the weak-Hilbert property. This modest contribution requires   the input of Kawamata's extension of Ueno's fibration theorem for closed subvarieties of abelian varieties to the case of ramified covers of products of two elliptic curves (see Theorem \ref{thm:kawamata_fibn}), and uses Faltings's finiteness theorem for higher genus curves in several ways.
  
  \begin{theorem}\label{thm33} Let $k$ be a finitely generated field of characteristic zero,
and  let $E_1$ and $E_2$ be elliptic curves over $k$. If $E_1(k)$ and $E_2(k)$ have positive rank, then $E_1\times E_2$ has the weak-Hilbert property over $k$.
  \end{theorem}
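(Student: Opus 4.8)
The plan is to fix the first projection $p_1 \colon E_1\times E_2 \to E_1$, whose fibres $F_e = \{e\}\times E_2$ are copies of $E_2$, and to produce a dense set of rational points in the complement fibre-by-fibre, treating all $n$ covers at once. The guiding principle is that a ramified cover of an elliptic curve has genus $\geq 2$ by Riemann--Hurwitz, so Faltings's theorem makes the image of its rational points finite; the role of Theorem \ref{thm:kawamata_fibn} is to certify that, away from a controlled locus, the restriction of each $\pi_i$ to a general fibre $F_e$ really is such a ramified cover.

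First I would apply Theorem \ref{thm:kawamata_fibn} to attach to each $\pi_i\colon Y_i\to E_1\times E_2$ its canonical fibration, governed by the connected stabiliser $B_i$ of the branch locus; since the abelian surface $E_1\times E_2$ is two-dimensional and $\pi_i$ is ramified, $B_i$ is one of the finitely many elliptic subvarieties $0\times E_2$, $E_1\times 0$, the graphs of isogenies $E_1\to E_2$ (when these exist), or the trivial subvariety. I would then isolate the covers with $B_i=0\times E_2$: for these the branch divisor is horizontal (pulled back along $p_1$), so the Stein factorisation $Y_i\to C_i\to E_1$ has $C_i\to E_1$ ramified, whence $C_i$ has genus $\geq 2$ by Kawamata's theorem, and Faltings makes $C_i(k)$ finite. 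Consequently the set of $e\in E_1(k)$ lying under $\pi_i(Y_i(k))$ is finite, and collecting these over all such $i$ gives a finite set $\Sigma\subset E_1(k)$ away from which the $B_i=0\times E_2$ covers contribute nothing to the fibres $F_e$.

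For every remaining cover (those with $B_i\neq 0\times E_2$) I would show that for all but finitely many $e\in E_1(k)$ the curve $\pi_i^{-1}(F_e)\to F_e\cong E_2$ is ramified with no \'etale component, so that each of its irreducible components has genus $\geq 2$ and Faltings forces the set $S_i(e)\subseteq E_2(k)$ of $f$ with $(e,f)\in\pi_i(Y_i(k))$ to be finite. Granting this, for each $e\in E_1(k)\setminus\Sigma$ outside a further finite set the fibre $F_e(k)$ minus $\bigcup_i S_i(e)$ is cofinite, hence Zariski dense in $F_e$; since $E_1(k)$ is infinite (positive rank) these good $e$ are Zariski dense in $E_1$, and since $E_2(k)$ is infinite each such fibre is genuinely infinite. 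A standard fibrewise argument then shows that the union of these cofinite subsets, taken over a Zariski-dense set of fibres, is Zariski dense in $E_1\times E_2$, which is the assertion.

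The main obstacle is the claim of the previous paragraph that, for $B_i\neq 0\times E_2$, the general fibre restriction $\pi_i^{-1}(F_e)\to F_e$ carries no \'etale (isogeny) component. An \'etale component over the generic fibre of $p_1$ would spread out to a subcover of $Y_i$ that is unramified over the generic $p_1$-fibre yet ramified only along horizontal divisors, i.e.\ to a $0\times E_2$-fibred substructure; ruling this out is exactly where the canonicity and maximality of the stabiliser $B_i$ in Theorem \ref{thm:kawamata_fibn} must be used, the point being that $B_i\neq 0\times E_2$ forbids any such \'etale-over-fibres part. Secondary technical points I would still need to handle are the passage to the geometrically integral components of the fibral curves together with their fields of definition before invoking Faltings, the reduction of $Y_i$ to its normalisation, and the extension of Faltings's finiteness to the finitely generated base field $k$ of characteristic zero; none of these should be serious, whereas the \'etale-component dichotomy is the crux and is precisely the input that separates the weak-Hilbert property from the very-weak-Hilbert property of Theorem \ref{thm2}.
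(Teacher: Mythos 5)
Your architecture coincides with the paper's (fibre $E_1\times E_2$ over $E_1$, treat all covers at once fibre by fibre, use Kawamata's theorem to control the covers and Faltings to kill rational points on genus $\geq 2$ curves), but the step you yourself flag as ``the crux'' --- that for the surviving covers the restriction to a general fibre $F_e$ is ramified with \emph{no \'etale component} --- is not proved, and the mechanism you propose for it is not available. Theorem \ref{thm:kawamata_fibn} only asserts the \emph{existence} of some abelian subvariety $B$ with the stated properties; it asserts no canonicity or maximality of $B$, so ``$B_i\neq 0\times E_2$ forbids any \'etale-over-fibres part'' cannot be extracted from it as cited. The paper closes this gap with two concrete ingredients absent from your sketch. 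First, Lemma \ref{lem:rh}: for a general-type cover, Riemann--Hurwitz gives $K_Y=R$, and bigness of $K_Y$ forces the branch locus to dominate both factors --- this is what guarantees the fibre restrictions are ramified at all. Second, after discarding (via Faltings) the covers whose Stein factorization $T_i\to E_1$ is ramified, the remaining $T_i\to E_1$ are \'etale with $T_i$ irreducible; since the branch locus dominates $E_1$, its preimage dominates $T_i$, and Lemma \ref{lemmatje} applied after base change to $T_i$ shows that \emph{every connected component} of every fibre $Y_{i,s}\to X_s$ is ramified. That is the actual argument ruling out \'etale components, and nothing in your proposal substitutes for it (a workable alternative would be subadditivity of Kodaira dimension: a generically-\'etale-over-$E_2$ component would exhibit $Y_i$ as an elliptic fibration, contradicting general type --- but you would have to write that out).

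A secondary gap: in the $B_i=0\times E_2$ case you assert that the Stein factorization $C_i\to E_1$ is ramified. This does not follow: the Kawamata \'etale cover $Y_i'$ dominates a general-type curve $W$ with $W\to E_1$ ramified, but that ramification can be entirely absorbed by $W\to C_i$, leaving $C_i\to E_1$ \'etale. What is true, and what the paper proves in Lemma \ref{lem:kaw2} via Chevalley--Weil plus Faltings applied to $W$, is that $Y_i(k)$ is not Zariski dense; that alone lets you discard such covers, since removing a subset of a proper closed set preserves density of $X(k)\setminus\bigcup_i\pi_i(Y_i(k))$. Your stronger claim that only finitely many $e\in E_1(k)$ are affected is both unjustified as stated and unnecessary for the conclusion.
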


Note that, if $X$ satisfies the weak-Hilbert property over $k$, then $X$ satisfies the very-weak-Hilbert property over $k$. However,   the very-weak-Hilbert property defined above differs \emph{a priori} from Corvaja-Zannier's definition.  Nonetheless, it seems reasonable to suspect that these notions are equivalent.
  
  Clearly,  
  a normal projective geometrically connected variety $X$ over a field $k$ with the Hilbert property (as defined in \cite[\S 3]{Serre}) satisfies the weak-Hilbert property over $k$. Thus, in particular, by Hilbert's irreducibility theorem, any rational variety over a number field $k$  satisfies the weak-Hilbert property over $k$ and, in particular, the very-weak-Hilbert property over $k$.  
  
By  \cite[Theorem~1.6]{CZHP}, if $X$ is a smooth projective geometrically connected variety  over a number field $k$ with the weak-Hilbert property, then $X$    satisfies the Hilbert property over $k$ if and only if it is geometrically simply-connected (i.e., $\pi_1^{et}(X_{\overline{k}}) = {1}$).  Indeed, by \emph{loc. cit.}, a smooth projective geometrically connected variety $X$ over a number field $k$ with the Hilbert property is geometrically simply-connected.  In particular, since abelian varieties over number fields are not geometrically-simply connected, they do not have the Hilbert property.

  Corvaja and Zannier conjectured that a smooth projective geometrically connected variety $X$ over a  number field $k$ for which the set $X(k)$ is dense satisfies the weak-Hilbert property over a finite field extension of $k$.   We    state  Corvaja-Zannier's conjecture in the slightly more general context of varieties over finitely generated fields of characteristic zero, and also include the implied (currently not known) equivalence between the weak-Hilbert property and the very-weak-Hilbert property (up to a finite field extension).
  
  \begin{conjecture}[Corvaja-Zannier]\label{conj} Let $X$ be a smooth projective geometrically connected variety over a finitely generated field $k$ of characteristic zero. Then the following statements are equivalent.
  \begin{enumerate}
\item There is a finite extension $M/k$ such that $X_M$ satisfies    the  weak-Hilbert property over $M$.
  \item There is a finite exension $N/k$ such that $X_N$ satisfies the very-weak-Hilbert property over $N$.
  \item    There is a finite extension $L/k$ such that $X(L$) is Zariski-dense in $X$;
  \end{enumerate}
  \end{conjecture}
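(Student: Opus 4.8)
The plan is to prove the two implications $(1)\Rightarrow(2)$ and $(2)\Rightarrow(3)$ unconditionally and by essentially formal arguments, and then to isolate the reverse implication $(3)\Rightarrow(1)$ as the genuine content of the conjecture and the main obstacle. For $(1)\Rightarrow(2)$ I would simply take $N=M$ and observe that a single finite surjective ramified cover $\pi\colon Y\to X_N$ with $Y$ geometrically integral and normal is an admissible family (the case $n=1$) in the definition of the weak-Hilbert property for $X_M$; hence $X(N)\setminus\pi(Y(N))$ is dense for every such $\pi$, which is exactly the very-weak-Hilbert property for $X_N$.

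For $(2)\Rightarrow(3)$ I would take $L=N$ and exhibit, when $\dim X\geq 1$ (the zero-dimensional case being trivial), at least one geometrically integral ramified cover of $X_N$ defined over $N$, so that the very-weak-Hilbert property is non-vacuous and forces density. Concretely, I would use a double cover attached to a sufficiently positive line bundle $\mathcal L$ and a section $s\in\HH^0(X_N,\mathcal L^{\otimes 2})$ with reduced zero divisor; since $N$ is infinite, the $N$-rational sections are Zariski dense in the space of all sections, and geometric integrality (equivalently, $s$ not being a square over $\overline N$) is an open condition, so a general $N$-rational $s$ yields a normal, geometrically integral $Y$ ramified along the branch divisor. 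Feeding this $\pi\colon Y\to X_N$ into $(2)$ gives that $X(N)\setminus\pi(Y(N))$, and therefore $X(N)$ itself, is dense, which is $(3)$ with $L=N$.

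The crux is $(3)\Rightarrow(1)$. Assuming $X(L)$ is dense for some finite $L/k$, one must produce a finite extension $M/L$ over which no finite family of ramified covers covers a dense set of $M$-points. This is precisely where the present paper's machinery applies when $X$ is a product of elliptic curves: after replacing $k$ by $L$, density of $X(L)$ forces each factor $E_i(L)$ to be infinite, hence of positive rank by Mordell-Weil, so Theorem \ref{thm2} supplies the very-weak-Hilbert property for all $n$, and Theorem \ref{thm33} supplies the full weak-Hilbert property for $n=2$. The mechanism behind both is Kawamata's structure theorem (Theorem \ref{thm:kawamata_fibn}), which describes how a ramified cover $Y\to X$ fibers over a quotient abelian variety and produces general-type behaviour transverse to the fibres, combined with Faltings's finiteness theorem to rule out Zariski density of the image $\pi(Y(M))$. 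For general $X$, however, $(3)\Rightarrow(1)$ is open and is essentially equivalent to Lang's conjecture on pseudo-hyperbolic varieties (see \cite{Lang1}): the ``non-abelian'' loci carved out by ramified covers should be of log-general type and hence carry no dense set of rational points. The main obstacle is thus exactly this last implication outside the abelian and low-dimensional ranges, where no unconditional replacement for Lang's conjecture is currently available.
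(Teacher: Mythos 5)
The statement you were given is a \emph{conjecture}: the paper contains no proof of it, and indeed explicitly flags the equivalence of the weak- and very-weak-Hilbert properties (up to a finite extension), i.e.\ the implication $(2)\Rightarrow(1)$, as currently unknown. So no blind proposal could close it, and the right measure of yours is whether you correctly separated the formal implications from the open content. You did. Your $(1)\Rightarrow(2)$ is the trivial $n=1$ specialization, exactly as the paper remarks in passing. Your $(2)\Rightarrow(3)$ supplies a point the paper leaves implicit: the very-weak-Hilbert property is vacuous unless a ramified, geometrically integral, normal cover actually exists, and your double cover $Y=\Spec(\OO_{X_N}\oplus \LL^{-1})$ attached to a general $N$-rational section $s\in \HH^0(X_N,\LL^{\otimes 2})$ with smooth (hence reduced) zero divisor works: such $s$ exists by Bertini in characteristic zero since $N$ is infinite, $Y$ is normal, ramified along $Z(s)\neq\emptyset$ when $\dim X\geq 1$, and geometrically integral because a reduced nonempty divisor cannot equal $2\operatorname{div}(t)$, so $s$ is not a square over $\overline{N}$. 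Feeding this single cover into $(2)$ gives density of $X(N)$. You also correctly isolate $(3)\Rightarrow(1)$ as the open crux and correctly read off what the paper proves toward it: density forces positive rank of each elliptic factor via Mordell--Weil/Lang--N\'eron, whence Theorem \ref{thm2} (very-weak-Hilbert property for any number of factors) and Theorem \ref{thm33} (full weak-Hilbert property, but only for two factors).

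Two inaccuracies in your commentary, neither affecting the mathematics of the partial argument. First, Kawamata's Theorem \ref{thm:kawamata_fibn} is \emph{not} the mechanism behind both theorems: the proof of Theorem \ref{thm2} uses only the fibration criterion (Theorem \ref{prop:thm}) via the product theorem (Theorem \ref{thm3}) together with Faltings applied to each elliptic factor; Kawamata enters only in Theorem \ref{thm33}, to dispose of covers of $E_1\times E_2$ of Kodaira dimension less than two (Lemma \ref{lem:kaw2}) and, through Lemma \ref{lem:rh}, to force branch loci of general-type covers to dominate both factors. Second, saying $(3)\Rightarrow(1)$ is ``essentially equivalent to Lang's conjecture'' overstates what is claimed anywhere: the paper's framing is Campana's conjecture --- that all three conditions should be equivalent to $X_{\overline{k}}$ being special --- which is motivated by, but not known to be equivalent to, Lang's conjecture; note also that specialness is the opposite regime from pseudo-hyperbolicity, so the heuristic you cite (``loci of log-general type carry no dense set of rational points'') is the complementary half of the picture rather than the implication itself.
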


    Campana's conjectures on ``special'' varieties provide another perspective on Conjecture \ref{conj}. Indeed, Campana conjectured that $(3)$ (and thus also $(1)$ and $(2)$) should be equivalent to $X_{\overline{k}}$ being special; see \cite[Conjecture~9.20]{CampanaOr0} (and also \cite{CampanaBook}). Examples of special varieties are abelian  varieties, K3 surfaces, and rationally connected smooth projective varieties.  Such varieties are thus expected (guided by the above conjectures) to satisfy the weak-Hilbert property over some finite extension of the finitely generated base field $k$ of characteristic zero. Proving that such varieties satisfy the weak-Hilbert property seems very difficult, as it is currently not even known whether all K3 surfaces or Fano varieties have a potentially dense set of rational points. We will comment a bit more on Campana's conjectures below.

 In \cite{FreyJarden} Frey-Jarden proved that   an abelian variety $A$ over a finitely generated field $k$ of characteristic zero admits a finite extension $L/k$ such that $A(L)$ is  Zariski-dense in $X$ (see also  \cite[\S3]{HassettTschinkel} or \cite[\S3]{JAut}).
Thus,  Corvaja-Zannier's conjecture (Conjecture \ref{conj}) predicts that an abelian variety $A$ over a finitely generated field $k$ of characteristic zero satisfies the weak-Hilbert property over some finite field extension of $k$.   Theorems \ref{thm2}  and \ref{thm33}    provide evidence for Corvaja-Zannier's conjecture.

The fact that  an elliptic curve of positive rank over $k$ satisfies the weak-Hilbert-property is already known and is,  as noted in \cite{CZHP}, a consequence of   Faltings's theorem (\emph{quondam} Mordell's conjecture) \cite{Faltings2, FaltingsComplements}.    

Our  results (Theorem \ref{thm2} and Theorem \ref{thm33}) generalize earlier work of Zannier in which evidence for     Conjecture \ref{conj} was provided for abelian varieties $A$ which are isogenous to $E^n$ with $E$ a non-CM elliptic curve \cite{ZannierDuke, ZannierRendi}.  Note that Zannier's arguments are very different from ours and   rely on Hilbertian properties of cyclotomic fields (see \cite{DvornZann, ZannierPisot}).
Theorem \ref{thm2} also provides a non-linear analogue of Corvaja's theorem for linear algebraic groups \cite{CorvajaHilb}.
  
Since elliptic curves of positive rank over a number field satisfy the weak-Hilbert property, the most natural approach to proving that the product of elliptic curves satisfies the weak-Hilbert property would be to  show  that the product of two varieties satisfying the weak-Hilbert property over $k$ satisfies the weak-Hilbert property. This product property seems however  difficult to establish. Instead, to prove Theorem \ref{thm2}, we verify a ``weaker'' expectation.

\begin{theorem}\label{thm3} Let $k$ be a field and $X_1,\ldots, X_n$ be  integral normal projective varieties over $k$.
Assume that, for every $i=1,\ldots, n$, the variety  $X_i$ satisfies   the weak-Hilbert property over $k$. Then $X_1\times \ldots \times X_n$ satisfies the very-weak-Hilbert property over $k$.
\end{theorem}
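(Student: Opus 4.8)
The plan is to fix a single finite surjective ramified morphism $\pi\colon Y\to X:=X_1\times\cdots\times X_n$ with $Y$ geometrically integral and normal, and to produce the required dense subset of $X(k)\setminus\pi(Y(k))$ by fibering $X$ over one of its factors and invoking the weak-Hilbert property of that single factor. Since the very-weak-Hilbert property involves only one cover at a time, no auxiliary covers enter; and since the weak-Hilbert property of a factor controls \emph{finite collections} of ramified covers, it is strong enough to absorb the (possibly reducible) fibres of $\pi$. Write $Z_i:=\prod_{l\ne i}X_l$, let $q_i\colon X\to Z_i$ be the projection, and for $z\in Z_i(k)$ put $Y_z:=\pi^{-1}(X_i\times\{z\})$ with its finite map $\pi_z\colon Y_z\to X_i$. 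Then $(x,z)\in\pi(Y(k))$ if and only if $x\in\pi_z(Y_z(k))$, so it suffices to find \emph{one} index $i$ such that, for a dense set of $z\in Z_i(k)$, the set $X_i(k)\setminus\pi_z(Y_z(k))$ is dense in $X_i$. Assembling over such $z$, which form a dense subset of $Z_i$ because each $X_l(k)$, and hence $Z_i(k)=\prod_{l\ne i}X_l(k)$, is dense, then yields a dense subset of $X\setminus\pi(Y(k))$.

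First I would choose the good index $i$. Let $B\subset X$ be the branch divisor of $\pi$, which is nonempty because $\pi$ is ramified; working over the regular locus of $X$, whose complement has codimension at least two, purity of the branch locus guarantees that $B$ is pure of codimension one. The key elementary observation is that some irreducible component $B_0$ of $B$ must dominate some $Z_i$: a prime divisor of $X$ that is pulled back from every $Z_i$ (equivalently, is ``independent of the $l$-th coordinate'' for all $l$) would have to be all of $X$, which is absurd. Fixing such an $i$ and $B_0$, the generic fibre $Y_{\eta_i}$ of $q_i\circ\pi\colon Y\to Z_i$ is integral, being a dominant image of the integral $Y$, and it maps to $(X_i)_{k(Z_i)}$ as a ramified cover, the ramification being visible along $B_0\cap(X_i)_{k(Z_i)}$. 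Consequently, for $z$ in a dense open $U\subseteq Z_i$, the fibre cover $\pi_z\colon Y_z\to X_i$ is ramified as well.

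It then remains to apply the weak-Hilbert property of $X_i$ fibrewise. For general $z\in U$ the cover $Y_z\to X_i$ is finite and possibly reducible; I would decompose it into irreducible components, normalize each, and discard those components that fail to be geometrically integral, since their $k$-points are confined to the locus where the geometric branches meet and hence to a nowhere dense subset of $X_i$, contributing nothing to density. The geometric components of $Y_{\eta_i}$ are permuted transitively by the absolute Galois group of $k(Z_i)$ and, ramification being a geometric property, all of them are ramified of degree at least two; specializing, for $z\in U$ every geometrically integral component $W\to X_i$ of the normalization of $Y_z$ is a geometrically integral normal ramified cover. Applying the weak-Hilbert property of $X_i$ to this finite collection shows that $X_i(k)\setminus\bigcup_W\pi_z(W(k))$, and hence $X_i(k)\setminus\pi_z(Y_z(k))$, is dense in $X_i$, as required.

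The main obstacle I expect is the uniform control of the fibres $Y_z$ as $z$ ranges over a dense subset of $Z_i(k)$: one needs that, away from a proper closed subset of $Z_i$, the number and the geometric behaviour (irreducibility type, ramification, degrees) of the components of $Y_z$ match those of the generic fibre $Y_{\eta_i}$, and that the $k$-points lying on non-geometrically-integral components are genuinely negligible. This is a Bertini-type specialization argument that must be carried out with care, and it is the part of the proof most in need of attention. The secondary technical points, namely the existence and purity of the branch divisor, the normality of $X$ and of the relevant fibres (both automatic in characteristic zero, where normality is preserved by products and by passage to the regular locus), and the harmless field-of-constants issues that arise when a fibre has no $k$-points, I would dispatch separately; none of them affects the overall strategy.
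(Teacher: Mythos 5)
Your argument is correct in outline and rests on the same two pillars as the paper's proof: purity of the branch locus forces some component of the branch divisor to dominate $Z_i=\widetilde{X_i}$ for some $i$ (your ``pulled back from every $Z_i$'' observation is exactly the paper's Lemma \ref{lem:dom0}, proved there by induction), and one then applies the weak-Hilbert property of the single factor $X_i$ to the finitely many components of the fibres $Y_z$, assembling over a dense set of $z\in Z_i(k)$. Where you genuinely diverge is in the bookkeeping. The paper argues by induction on $n$ and routes everything through a fibration criterion (Theorem \ref{prop:thm}) built on the Stein factorization $Y\to T\to Z_i$: if $T\to Z_i$ is ramified it invokes the very-weak-Hilbert property of $Z_i$ (the inductive hypothesis) to avoid $\psi(T(k))$ outright, and if $T\to Z_i$ is \'etale the fibres $Y_z$ decompose as $\bigsqcup_j Y_{t_j}$ indexed by $\psi^{-1}(s)$, each piece automatically normal, geometrically connected over its field of constants, and ramified because the ramification locus $R\subset Y$ surjects onto the irreducible $T$. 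You dispense with both the induction and the dichotomy, using only the density of $Z_i(k)=\prod_{l\neq i}X_l(k)$ and a direct decomposition of $Y_z$ for $z$ in a dense open $U$; this is a real streamlining, since over such $U$ the Stein cover is \'etale anyway and the ``ramified $T\to Z_i$'' case never arises. The price is exactly the step you flag: the uniform specialization of the geometric components of $Y_{\eta_i}$ (their number, normality, surjectivity and ramification over $X_i$) to the fibres $Y_z$. This is true and standard, and the Stein factorization is precisely the device the paper uses to make it clean --- your Galois-transitivity argument for why \emph{every} geometric component is ramified is the fibrewise shadow of ``$R$ surjects onto the irreducible $T$''. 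Two small corrections: a normal integral variety over $k$ (char.\ $0$) that is not geometrically integral has \emph{no} $k$-points at all, which is cleaner than confining them to where the geometric branches meet; and be careful that normalizing a component of $Y_z$ can lose $k$-points of the image, so you should instead shrink $U$ so that $Y_z$ is already normal (as the paper does in the proof of Theorem \ref{thm33}), which is harmless since the generic fibre is normal.
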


  Our approach to  Theorem \ref{thm3} is inspired greatly by   the arguments of Bary-Soroker--Fehm--Petersen \cite{BarySoroker}. Indeed, in \emph{loc. cit.} it is shown that, if $X$ and $Y$ satisfy the Hilbert property over $k$, then $X\times Y$ satisfies the Hilbert property over $k$.  Their result answers an old question of Serre in the positive (see the Problem stated in \cite[\S3.1]{Serre}). We mention that Bary-Soroker--Fehm--Petersen's  product theorem for varieties with the Hilbert property can also be deduced from \cite[Lemma~8.12]{HarpazWittenberg} (which builds on Wittenberg's thesis \cite[Lemma~3.12]{Wittenberg}).

The most general criterion we prove for verifying the very-weak-Hilbert property for a variety is Theorem \ref{prop:thm}. It is precisely this result which was inspired by Bary-Soroker--Fehm--Petersen's work \cite{BarySoroker}.

Let us briefly mention that   Theorem \ref{thm3} has further consequences.    For example, if  $E$ is an elliptic curve over a finitely generated field  $k$ of characteristic zero with $E(k)$ of positive rank,  then the variety $E^n \times \mathbb{P}^m_k$ satisfies the very-weak-Hilbert property over $k$.  Moreover, if $X$ is the K3 surface defined by $x^4+y^4=z^4+w^4$ in $\mathbb{P}^3_{k}$, then $E^n\times X$ also satisfies the very-weak-Hilbert property over $k$, as Corvaja-Zannier proved that $X$ satisfies the Hilbert property over $k$ (see \cite[Theorem~1.4]{CZHP}).  
 
\subsection{Campana's conjectures}   Campana's aforementioned notion of special variety forms an important guiding principle in our study of varieties with the weak-Hilbert property. 
 In fact, Campana's conjectures reach much further and also predict a precise interplay between density of rational points and    dense entire curves (much like Lang's conjectures \cite{Lang1}); this is also hinted at by Corvaja-Zannier (see \cite[\S 2.4]{CZHP}). 
 
 To explain this, let us say that a variety $X$ over $\mathbb{C}$ satisfies the \emph{Brody-Hilbert property} if, for every 
  integer $n\geq 1$ and finite surjective ramified morphisms $\pi_i:Y_i\to X$ with $Y_i$ integral and normal ($i=1,\ldots, n$), there is a holomorphic map $\mathbb{C}\to X^{\an}$ with Zariski-dense image  which does not lift to  any of the covers $\pi_i^{\an}:Y_i^{\an}\to X^{\an}$. A special smooth projective connected variety over $\mathbb{C}$ is   conjectured to satisfy the Brody-Hilbert property; see \cite{CampanaBook}.  In this direction it was shown recently by Campana-Winkelmann that a rationally connected variety over $\CC$ satisfies the Brody-Hilbert property; see \cite{CampanaWinkelmann}. 
  We  also mention   that   an abelian variety $A$ over $\mathbb{C}$ satisfies the Brody-Hilbert property. To see this, given  a ramified cover $X\to A$   with $X$ a  normal integral variety over $\CC$, note that a dense entire curve $\CC\to A^{\an}$  which is transversal to the branch locus of $X\to A$ does not lift to $X^{\an}$.    On the other hand, it is not known whether every K3 surface satisfies the Brody-Hilbert property, as we do not know whether such surfaces admit a dense entire curve.

This being said, our motivation for writing this short note is to   call some attention to the beautiful string of new ideas surrounding the weak-Hilbert property, potential density of rational points on varieties over number fields, the existence of dense entire curves, and Campana's special varieties. In fact,  we were naturally led to investigating these problems by our work on Lang's conjectures  \cite{Lang1} (see \cite{vBJK, JBook, JKa, JLevin, JLitt, JXie}.)

 \begin{ack}
 We are grateful to Fr\'ed\'eric Campana for many useful and inspiring discussions. We thank David Holmes and Siddharth Mathur for helpful discussions about unramified morphisms. We thank Raymond van Bommel and Olivier Wittenberg for   their help in finding a simple proof of Lemma \ref{lem:dom0}. We gratefully acknowledge support from the IHES. We thank the referee for several useful comments.
 \end{ack}
 
 \begin{con}
 If $k$ is a field, then a variety over $k$ is a finite type separated scheme over $k$. If $X$ and $Y$ are varieties over $k$, then we let $X\times Y$ denote the fiber product $X\times_{\Spec k} Y$. A field $k$ is said to be \emph{finitely generated} if it is finitely generated over its prime field.   We follow the stacks project and say that  a  morphism of schemes is \emph{unramified} if it  is unramified at every point of $X$; see  \cite[Tag~02G3]{stacks-project}.
 A morphism of schemes $f:X\to Y$ is \emph{ramified} if it is not unramified. If $X\to S$ is a morphism of schemes and $s\in S$, then $X_s$ denotes the scheme-theoretic fibre  of $X$ over $s$.
 \end{con}

  \section{The very-weak-Hilbert property}  
Throughout this section, let $k$ be a field. Moreover, 
let $f:X\to S$ be a   morphism of smooth projective    integral varieties over $k$. Furthermore,   
let $\pi:Y\to X$ be a finite surjective ramified morphism  and let \[\xymatrix{   & & Y \ar[ddll] \ar[d]^{\pi} \\  & & X \ar[d] \\ T \ar[rr]^{\psi} & & S }\] be the Stein factorization of the composed morphism $Y\to X\to S$ with   $T$ projective normal integral over $k$ and   the geometric fibers of $Y\to T$   connected (see    \cite[\S III.11]{Har}).

  \begin{proposition}  \label{prop1} Let $U\subset S$ be a dense open subset. Assume that  $S$ satisfies the very-weak-Hilbert property over $k$ and that, for every $s$ in $U(k)\setminus \psi(T(k))$, the set $X_s(k)$ is dense in $X_s$.  If the morphism $\psi:T\to S$ is ramified, then     $X(k)\setminus \pi(Y(k))$ is dense. 
  \end{proposition}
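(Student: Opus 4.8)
The plan is to use the commutativity of the Stein-factorization diagram to force any rational point of $X$ lying in $\pi(Y(k))$ to have image in $\psi(T(k))$, and then to apply the very-weak-Hilbert property of $S$ to the ramified cover $\psi\colon T\to S$ in order to produce a dense family of fibres of $f$ all of whose $k$-points avoid $\pi(Y(k))$.

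The first step is a lifting obstruction, which is the heart of the argument. Let $g\colon Y\to T$ be the first morphism of the Stein factorization, so that $\psi\circ g=f\circ\pi$. If $x\in X(k)$ lies over $s:=f(x)$ and $x=\pi(y)$ for some $y\in Y(k)$, then $t:=g(y)\in T(k)$ satisfies $\psi(t)=f(\pi(y))=s$, whence $s\in\psi(T(k))$. Contrapositively, for every $s\in S(k)\setminus\psi(T(k))$ one obtains the inclusion $X_s(k)\subseteq X(k)\setminus\pi(Y(k))$. It therefore suffices to exhibit a Zariski-dense family of fibres $X_s$ with $s\in S(k)\setminus\psi(T(k))$ and with $X_s(k)$ dense in $X_s$.

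To produce the base points I would invoke the very-weak-Hilbert property of $S$ applied to $\psi\colon T\to S$. Here $\psi$ is finite by construction, ramified by hypothesis, and surjective, since $\pi$ is surjective and $f$ is dominant (hence surjective, being proper), so that $Y\to S$ is surjective; granting that $T$ is geometrically integral, $\psi$ is an admissible cover and the very-weak-Hilbert property yields that $S(k)\setminus\psi(T(k))$ is dense in $S$. Intersecting a dense subset with the dense open $U$ preserves density, so $\Sigma:=U(k)\setminus\psi(T(k))$ is dense in $S$, and by hypothesis $X_s(k)$ is dense in $X_s$ for every $s\in\Sigma$.

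It remains to upgrade fibrewise density to density in $X$, which I would do by a standard fibration argument. Given any nonempty open $\Omega\subseteq X$, dominance of $f$ makes $f(\Omega)$ a constructible dense subset of $S$, hence one containing a dense open $V$; since $\Sigma$ is dense, pick $s\in\Sigma\cap V$. Then $\Omega\cap X_s$ is a nonempty open subset of $X_s$, and density of $X_s(k)$ furnishes a point $x\in\Omega$ with $f(x)=s\in\Sigma$, so that $x\in X_s(k)\subseteq X(k)\setminus\pi(Y(k))$ by the key observation. As $\Omega$ was arbitrary, $X(k)\setminus\pi(Y(k))$ is dense. I expect the main obstacle to lie not in this clean contrapositive but in checking that $\psi$ is a legitimate cover for the very-weak-Hilbert property of $S$: one must know that $\psi$ is surjective, which forces $f$ to be dominant (otherwise $\psi(T)=\overline{f(X)}$ is a proper closed subset of $S$ and nothing can be concluded), and that $T$ is geometrically integral. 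The latter I would extract from the Stein factorization together with geometric integrality of $Y$ in the intended application, using that over the perfect field $k$ a normal, geometrically connected variety is geometrically integral; dominance of $f$ is then also exactly what the final fibred density step requires.
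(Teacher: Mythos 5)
Your proof is correct and follows essentially the same route as the paper: apply the very-weak-Hilbert property of $S$ to the ramified finite cover $\psi\colon T\to S$ to get density of $U(k)\setminus\psi(T(k))$, observe via the Stein factorization that $X_s(k)\cap\pi(Y(k))=\emptyset$ for such $s$, and sweep out a dense set of $X$ by the fibres $X_s(k)$. You are in fact more explicit than the paper about the tacit hypotheses (surjectivity of $f$, hence of $\psi$, and geometric integrality of $T$, which follows from that of $Y$ over the perfect field $k$), which the paper's proof uses without comment.
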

  \begin{proof}   Since $S$ satisfies the very-weak-Hilbert property over $k$ and $T\to S$ is a ramified finite surjective morphism with $T$ a normal integral variety over $k$, the set     $S(k)\setminus \psi(T(k))$ is dense in $S$. In particular, the set $U(k)\setminus \psi(T(k))$ is dense in $S$. Now, note that the set 
    \[
  \bigcup_{s\in U(k)\setminus \psi(T(k))} X_s(k).
  \] is dense in $X$. Indeed, since $X_s(k)$ is dense in $X_s$, the  closure  of $  \bigcup_{s\in U(k)\setminus \psi(T(k))} X_s(k)$ in $X$ contains the dense set   $\cup_{s\in U(k)\setminus \psi(T(k))} X_s$.  Now, note that    $X(k)\setminus \pi(Y(k))$   contains the (dense) set
  \[
  \bigcup_{s\in U(k)\setminus \psi(T(k))} X_s(k). 
  \]  This concludes the proof.
  \end{proof}

  \begin{lemma}\label{lemmatje} Assume that the branch locus $D$ of $\pi:Y\to X$ dominates $S$ (i.e., $f(D) = S$). Then, for every point $s$ in $S$, the morphism $Y_s\to X_s$ is finite surjective ramified.
  \end{lemma}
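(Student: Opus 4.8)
The statement has three parts---finite, surjective, and ramified---of which only the last carries real content. Since $Y_s\to X_s$ is the base change of $\pi\colon Y\to X$ along the morphism $X_s\to X$, and since both finiteness and surjectivity are stable under base change, the morphism $Y_s\to X_s$ is automatically finite and surjective. The plan is therefore to concentrate entirely on showing that $Y_s\to X_s$ is ramified for every $s\in S$.

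For this I would work with the sheaf of relative differentials and the standard characterization that, for a morphism of finite type, being unramified at a point is equivalent to the vanishing of the stalk of the relative differentials there (this is the formulation underlying the convention of \cite[Tag~02G3]{stacks-project}). Writing $R=\Supp\Omega_{Y/X}$ for the ramification locus in $Y$, the branch locus is $D=\pi(R)\subseteq X$, which is closed because $\pi$ is finite, hence closed. The hypothesis $f(D)=S$ says exactly that $D$ meets every fibre: given $s\in S$, I would first choose $d\in D$ with $f(d)=s$, then a point $r\in R$ with $\pi(r)=d$, and finally a point $\tilde r\in Y_s$ lying over $r$ under the projection $g\colon Y_s\to Y$. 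Such a $\tilde r$ exists because the set-theoretic image of $g$ is $\pi^{-1}(f^{-1}(s))=(f\circ\pi)^{-1}(s)$, and $r$ lies in this fibre by construction.

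It then remains to check that $\Omega_{Y_s/X_s}$ does not vanish at $\tilde r$. Here I would invoke the compatibility of differentials with base change, $\Omega_{Y_s/X_s}\cong g^{*}\Omega_{Y/X}$, so that the stalk in question is $(\Omega_{Y/X})_r\otimes_{\mathcal O_{Y,r}}\mathcal O_{Y_s,\tilde r}$. Reducing modulo the maximal ideal of $\mathcal O_{Y_s,\tilde r}$ turns this into $(\Omega_{Y/X})_r\otimes_{\mathcal O_{Y,r}}\kappa(\tilde r)$, which (since the structure map is local) factors as $\big((\Omega_{Y/X})_r\otimes_{\mathcal O_{Y,r}}\kappa(r)\big)\otimes_{\kappa(r)}\kappa(\tilde r)$. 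Now $(\Omega_{Y/X})_r$ is a nonzero finitely generated $\mathcal O_{Y,r}$-module because $r\in R$, so by Nakayama its reduction modulo $\mathfrak m_r$ is a nonzero $\kappa(r)$-vector space, and this survives the field extension $\kappa(r)\hookrightarrow\kappa(\tilde r)$. Hence the stalk of $\Omega_{Y_s/X_s}$ at $\tilde r$ is nonzero, and $Y_s\to X_s$ is ramified.

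The step I expect to be the genuine obstacle---rather than the formal base-change bookkeeping---is ensuring that the ramification actually persists after restriction to the fibre. A priori the relative differentials could collapse upon tensoring, and without the hypothesis $f(D)=S$ the conclusion is simply false: a cover ramified along a horizontal divisor that misses some fibres restricts to an \emph{\'etale} cover over those fibres. The Nakayama argument together with the faithful flatness of the residue-field extension is precisely what excludes this collapse, and it is the one place where the geometric hypothesis $f(D)=S$ is really used.
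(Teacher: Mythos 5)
Your proof is correct and takes essentially the same route as the paper: both arguments use the hypothesis $f(D)=S$ to locate a branch point $d$ lying over the given $s\in S$, and then transport the ramification at $d$ into the fibre $Y_s\to X_s$ via compatibility of (un)ramifiedness with base change. The only difference is packaging: the paper invokes the fibrewise criterion of \cite[Tag~00UV]{stacks-project} together with the identification $Y_d=Y_s\times_{X_s}d$, whereas you verify the required nonvanishing directly on stalks of $\Omega_{Y/X}$ via pullback of differentials and Nakayama; both verifications are standard and sound.
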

  \begin{proof} 
  A morphism of varieties $V\to W$ over $k$ is unramified if and only if, for every $w $ in $W$, the morphism $V_w\to \Spec k(w)$ is unramified (i.e., \'etale); see \cite[Tag~00UV]{stacks-project}. Now, let $s$ be a point of $S$. To show that the finite surjective morphism $Y_s\to X_s$ is ramified, let $d\in D$ be a point lying over $s$.  Then, by the definition of the branch locus, $Y_d\to \Spec k(d)$ is ramified. Note that $Y_d = Y_s\times_{X_s} d$ as schemes over $d=\Spec k(d)$. As the fibre of $Y_s\to X_s$ over $d$ is ramified, it follows that $Y_s\to X_s$ is ramified.  
  \end{proof}

  \begin{theorem}\label{prop:thm}  
 Let $U\subset S$ be a dense open subscheme of $S$. Assume that the following statements hold.
\begin{enumerate}
\item The variety  $S$ satisfies the very-weak-Hilbert property over $k$.
\item For every $s$ in $U(k)$, the projective variety $X_s$ is normal  integral and satisfies the weak-Hilbert property over $k$.   
\item The branch locus $D$  of $\pi:Y\to X$ dominates $S$, i.e., $f(D) = S$. 
\end{enumerate}  Then     $X(k)\setminus \pi(Y(k))$ is dense in $X$.
  \end{theorem}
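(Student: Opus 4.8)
The plan is to run a dichotomy on whether the Stein factorisation map $\psi\colon T\to S$ is ramified, reducing one case directly to Proposition \ref{prop1} and the other to a fibrewise application of the weak-Hilbert property after base change along $\psi$. The reduction used throughout is the following observation on $k$-points: any $y\in Y(k)$ maps to a $k$-point of $T$, and hence to a point of $\psi(T(k))$, so for $s\in S(k)$ one has $\pi(Y(k))\cap X_s=\pi_s(Y_s(k))$, where $Y_s=\bigsqcup_{t\in T_s}Y_t$ with each fibre $Y_t$ of the Stein map $Y\to T$ geometrically connected; only the $k$-rational $t\in T_s(k)$ can carry $k$-points.

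\emph{Case $\psi$ ramified.} Here I would simply invoke Proposition \ref{prop1}. Its hypotheses are met: $S$ has the very-weak-Hilbert property by $(1)$; and for every $s\in U(k)\setminus\psi(T(k))\subseteq U(k)$ the fibre $X_s$ has the weak-Hilbert property by $(2)$, which forces $X_s(k)$ to be dense in $X_s$ (apply the weak-Hilbert property to a single ramified cover of $X_s$). As $\psi$ is ramified, Proposition \ref{prop1} yields that $X(k)\setminus\pi(Y(k))$ is dense.

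\emph{Case $\psi$ unramified.} Now $\psi(T(k))$ may itself be dense, so one cannot avoid it and must use $(2)$ in earnest. I would base-change the tower along $\psi$ and let $X_T^\circ\subseteq X\times_S T$ be the component dominated by $Y$, so that $Y\to X_T^\circ\to T$ is again a tower with $Y\to X_T^\circ$ finite surjective and $Y\to T$ having geometrically connected fibres. Since $X_T^\circ\to X$ is unramified (a base change of $\psi$), the branch locus of $Y\to X_T^\circ$ is the preimage of $D$, and because $D$ dominates $S$ it dominates $T$. Lemma \ref{lemmatje}, applied over the base $T$, then shows that for every $t\in T$ the morphism $Y_t\to X_{\psi(t)}$ is finite surjective ramified, in particular of degree $\geq 2$. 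Shrinking $U$ to a dense open $U'$, and using characteristic zero together with generic flatness, I may further assume that over $U'$ the fibres $Y_t$ are normal, hence geometrically integral (normal plus geometrically connected in characteristic zero). For the dense set of $s\in U'(k)$ whose fibre $\psi^{-1}(s)$ lies in this good locus (a dense open condition, since $\psi$ is finite and $S(k)$ is dense by $(1)$), the finitely many covers $\{\,Y_t\to X_s : t\in\psi^{-1}(s)\cap T(k)\,\}$ are ramified covers of $X_s$ by geometrically integral normal varieties, so the weak-Hilbert property of $X_s$ gives that $X_s(k)\setminus\bigcup_t\pi_t(Y_t(k))=X_s(k)\setminus\pi(Y(k))$ is dense in $X_s$. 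Taking the union over such $s$ produces, exactly as in the proof of Proposition \ref{prop1}, a dense subset of $X(k)\setminus\pi(Y(k))$.

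The delicate point is the unramified case, and specifically guaranteeing that every $k$-rational component $Y_t\to X_s$ is a genuine cover---surjective and of degree $\geq 2$---so that the weak-Hilbert property of $X_s$ can be invoked; a component mapping birationally, hence isomorphically since $X_s$ is normal, onto $X_s$ would absorb all of $X_s(k)$ and ruin the argument. This is exactly what the dichotomy resolves: such a degree-one generic component would exhibit $Y\to X$ as a base change of $\psi$, forcing $\pi$ to be unramified whenever $\psi$ is---contradicting the standing hypothesis that $\pi$ is ramified---whereas if $\psi$ is ramified one sidesteps the issue entirely via Proposition \ref{prop1}. The surjectivity and ramification are then supplied uniformly by Lemma \ref{lemmatje} over the base $T$. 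The remaining ingredients (generic normality of the fibres $Y_t$ in characteristic zero, constructibility of the good locus in $S$, and discarding the non-dominant components' images as proper closed subsets) are routine spreading-out arguments.
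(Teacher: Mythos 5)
Your proposal is correct and follows essentially the same route as the paper: the dichotomy on whether the Stein factorisation map $\psi\colon T\to S$ is ramified, with the ramified case handed to Proposition \ref{prop1} and the unramified case handled by base-changing along the (now \'etale) $\psi$, using Lemma \ref{lemmatje} to see that each fibre component $Y_t\to X_s$ is a ramified cover, and then invoking the weak-Hilbert property of $X_s$ fibrewise. Your extra care about shrinking $U$ for normality of the fibres $Y_t$ and about discarding non-$k$-rational points of $\psi^{-1}(s)$ addresses points the paper treats more briefly (the former appears explicitly only in its proof of Theorem \ref{thm33}), but the underlying argument is the same.
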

  \begin{proof}
  If $\psi:T\to S$ is ramified, then it follows from Proposition \ref{prop1} that $X(k)\setminus \pi(Y(k))$ is dense in $X$. (We do not need here the assumption that $f(D) = S$.) Thus, to prove the theorem, we may and do assume that $\psi:T\to S$ is unramified. Since $S$ is smooth and $\psi:T\to S$ is a finite surjective unramified morphism, it follows that $T$ is smooth, so that $\psi:T\to S$ is in fact flat, hence \'etale. 
  
  Note that we have a commutative diagram of morphisms
  \[
  \xymatrix{& & Y_T \ar[d]_{\pi_T} \ar[rr] & & Y\ar[d]^{\pi} & &    \\  D_T \ar[rrd]_{\textrm{surjective}} & & X_T \ar[d]_{f_T} \ar[rr]^{\textrm{finite \'etale}} & & X  \ar[d]^{f} & & D \ar[dll]^{\textrm{surjective}} \\ & &  T \ar[rr]_{\psi}^{\textrm{finite \'etale}} & & S & & }
  \]
As the branch locus $D$ of $\pi$ dominates $S$, it follows that the branch locus $D_T$ of $\pi_T:Y_T\to X_T$ dominates $T$. This implies that, for all $t$ in $T$, the morphism $Y_t\to X_t$ is ramified (Lemma \ref{lemmatje}). We now use this observation.

For $s\in U(k)$, consider the finite surjective morphism $Y_s\to X_s$. Let $\{t_1,\ldots, t_r\} = \psi^{-1}\{s\}$. Then $Y_s = Y_{t_1}\sqcup \ldots \sqcup Y_{t_r}$ and, as explained above, every  induced finite surjective morphism $\pi_{s,j}:Y_{t_j}\to X_s$ is ramified. Since every $Y_{t_i}$ is integral and normal and $X_s$ satisfies the weak-Hilbert property over $k$, it follows that 
\[
X_s(k)\setminus \cup_{j=1}^r \pi_{s,j}(Y_{t_j}(k)) = X_s(k) \setminus \pi_s(Y_s(k))
\] is dense in $X_s$. Since, for every $s$ in $U(k)$, the latter set is dense in $X_s$, we conclude that $X(k)\setminus \pi(Y(k))$ is dense in $X$, as required.
  \end{proof}

\section{Products of varieties}
To study products of varieties $X_1,\ldots, X_n$, we will exploit the many projections such a product is equipped with. 
  \begin{definition}\label{defn}
  Let $X_1,\ldots, X_n$ be varieties over $k$ and let $X:=X_1\times \times \ldots \times X_n$. Define $\widetilde{X_i}$ to be the product of $X_1, \ldots, X_{i-1}, X_{i+1},\ldots, X_n$. We let $p_i:X\to \widetilde{X_i}$ be the natural projection.
  \end{definition}
  
  We include a brief proof of the following simple observation.
  
 \begin{lemma}\label{lem:dom0}
  Let $X_1,\ldots, X_n$ be   smooth projective geometrically integral varieties over $k$, and let $D\subset \prod_{i=1}^n X_i$ be a non-empty closed subscheme   of codimension one. Then, there is an integer $j\in \{1,\ldots, n\}$ such that   $p_j(D) = \widetilde{X_j}$.
 \end{lemma}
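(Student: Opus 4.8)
The plan is to reduce the statement to an irreducible divisor and then run a dimension count. First I would fix a top-dimensional irreducible component $D_0$ of $D$, so that $\dim D_0 = \dim X - 1$ where $X = \prod_{i=1}^n X_i$; since $p_j(D) \supseteq p_j(D_0)$, it suffices to find an index $j$ with $p_j(D_0) = \widetilde{X_j}$. As each $X_i$ is projective, every projection $p_j\colon X \to \widetilde{X_j}$ is proper, so $p_j(D_0)$ is a closed irreducible subset of $\widetilde{X_j}$. Throughout I would write $d_i = \dim X_i$ and $d = \dim X = \sum_{i} d_i$.

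Next I would argue by contradiction, assuming that $p_j(D_0) \subsetneq \widetilde{X_j}$ for every $j$, so that $\dim p_j(D_0) \leq \dim \widetilde{X_j} - 1 = d - d_j - 1$. Applying the theorem on the dimension of fibres (see \cite{Har}) to the dominant morphism $p_j|_{D_0}\colon D_0 \to p_j(D_0)$, the generic fibre has dimension $\dim D_0 - \dim p_j(D_0) \geq (d-1) - (d - d_j - 1) = d_j$. On the other hand, this fibre is a closed subscheme of the corresponding fibre of $p_j$, which is a base change of $X_j$ and hence (as $X_j$ is geometrically integral) is integral of dimension $d_j$. Therefore the generic fibre of $p_j|_{D_0}$ is already the whole fibre of $p_j$; passing to closures this gives $D_0 \supseteq p_j^{-1}(p_j(D_0))$, and since the reverse inclusion is automatic we conclude $D_0 = p_j^{-1}(p_j(D_0))$, i.e. $D_0$ is a ``cylinder'' in the $j$-th direction. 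Under the contradiction hypothesis this holds for every $j$ simultaneously.

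The last and most delicate step is a purely combinatorial lemma: a nonempty closed subset $Z \subseteq \prod_{i=1}^n X_i$ satisfying $Z = p_j^{-1}(p_j(Z))$ for all $j$ must equal $X$. I would prove this by induction on $n$: writing $Z = p_n^{-1}(p_n(Z)) = C \times X_n$ with $C \subseteq \widetilde{X_n}$, the cancellation rule $A \times X_n = B \times X_n \Rightarrow A = B$ (valid since $X_n \neq \emptyset$) shows that $C$ is again a cylinder in each of its directions inside $\widetilde{X_n} = \prod_{i<n} X_i$, so $C = \widetilde{X_n}$ by induction and hence $Z = X$. Applying this to $Z = D_0$ gives $D_0 = X$, contradicting $\dim D_0 = d - 1 < d$. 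Hence some $p_j(D_0) = \widetilde{X_j}$, as required. I expect no deep input to be needed: the only points demanding care are the fibre-dimension bookkeeping (where geometric integrality of the $X_i$ is used to guarantee that the relevant fibres are irreducible) and the clean formulation of the cylinder induction.
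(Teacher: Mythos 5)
Your proof is correct and rests on the same idea as the paper's: a codimension-one irreducible subset whose image under $p_j$ is a proper closed subset must equal the full preimage $p_j^{-1}(p_j(D_0))$, and an induction on the number of factors then forces some projection to be surjective. The paper packages this as a direct induction (peeling off $X_1$, showing $D = X_1\times p_1(D)$, and recursing into $p_1(D)\subset \widetilde{X_1}$), whereas you argue by contradiction via the fibre-dimension theorem together with an auxiliary ``cylinder in every direction'' lemma; the underlying dimension count and use of geometric integrality are the same.
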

  \begin{proof} 
  We argue by induction on $n$.  We may and do assume that $D$ is integral. Write $X =\prod_{i=1}^n X_i$. Note that 
  $$D\subseteq X_1 \times p_1(D) \subseteq X.$$
  If $X_1\times p_1(D) = X$, then $p_1(D) = \widetilde{X_1}$, as required. Thus, we may assume that $X_1\times p_1(D)\neq X$. Then, as $D$ is of codimension one, it follows that $D = X_1 \times p_1(D)$. In this case, as $p_1(D)$ is integral and of codimension one in $\widetilde{X_1}$,  after relabeling if necessary,  it follows from the induction hypothesis that $p_1(D)$ surjects onto $X_3\times \ldots\times X_n$. This implies that $D = X_1\times p_1(D)$ surjects onto $\widetilde{X_2} = X_1\times X_3\times \ldots X_n$, as required. 
  \end{proof}
  
  \begin{lemma}\label{lem:dom}   Let $X_1,\ldots, X_n$ be   smooth projective geometrically integral varieties over $k$, and let $\pi:Y\to X_1\times \ldots \times X_n$ be a finite surjective ramified morphism with $Y$ an integral normal projective variety.  Let $D$ be the branch locus of $\pi$. Then  there is an integer $j\in \{1,\ldots, n\}$ such that $p_j(D) = \widetilde{X_j}$.  
  \end{lemma}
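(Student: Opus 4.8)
The plan is to reduce the statement to Lemma \ref{lem:dom0}, which already handles an arbitrary non-empty closed subscheme of codimension one inside a product of smooth projective geometrically integral varieties. Thus the entire content of the lemma lies in verifying that the branch locus $D$ of $\pi$ is a \emph{non-empty} closed subset of $X := \prod_{i=1}^n X_i$ which is of \emph{codimension one}. Once this is in hand, Lemma \ref{lem:dom0} produces the desired index $j$ with $p_j(D) = \widetilde{X_j}$, and nothing further is needed.

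First I would dispose of non-emptiness. By hypothesis $\pi$ is ramified, i.e.\ not unramified, so there is a point of $Y$ at which $\pi$ fails to be unramified; the image of such a point lies in $D$ by definition of the branch locus, whence $D \neq \emptyset$.

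The key step is the codimension-one claim, and here I would invoke the Zariski--Nagata purity theorem for the branch locus. Its hypotheses are exactly what the statement supplies: since each $X_i$ is smooth over $k$, the product $X = \prod_{i=1}^n X_i$ is smooth over $k$, and in particular regular; by assumption $Y$ is integral and normal; and $\pi\colon Y \to X$ is finite and surjective. Purity of the branch locus then asserts that the branch locus of such a morphism is either empty or pure of codimension one in $X$. Since we have already seen that $D$ is non-empty, we conclude that $D$ is pure of codimension one, and in particular of codimension one.

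Having established that $D$ is a non-empty closed subscheme of $X$ of codimension one, I would finish by applying Lemma \ref{lem:dom0}, obtaining an integer $j \in \{1,\ldots,n\}$ with $p_j(D) = \widetilde{X_j}$, as required. The only genuine input beyond the combinatorial Lemma \ref{lem:dom0} is the purity theorem; accordingly, the main (and essentially only) obstacle is confirming that its hypotheses --- regularity of the base $X$ and normality of the source $Y$ --- are met, both of which are built into the statement.
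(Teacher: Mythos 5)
Your proposal is correct and follows exactly the paper's own argument: non-emptiness of $D$ from ramifiedness of $\pi$, Zariski--Nagata purity to get that $D$ is pure of codimension one, and then Lemma \ref{lem:dom0}. No differences worth noting.
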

  \begin{proof}     Note that $D$ is non-empty, as $\pi$ is ramified.
 Then,  by Zariski-Nagata purity \cite[Theorem~X.3.1]{SGA1}, the branch locus $D$ is a closed subscheme pure of codimension one, so that the lemma follows from Lemma \ref{lem:dom0}.  
  \end{proof}

  \begin{proof}[Proof of Theorem \ref{thm3}] We argue by induction on $n$. If $n=1$, the statement is obvious. Thus, we may and do assume that $n>1$. 
  Write $X= \prod_{i=1}^n X_i$ and let $\pi:Y\to X$ be a finite surjective ramified morphism.  It suffices to show that $X(k)\setminus \pi(Y(k))$ is dense in $X$. By Lemma \ref{lem:dom},  there is an integer $j\in \{1,\ldots, n\}$ such that the branch locus of $\pi:Y\to X$ dominates $\widetilde{X}_j$ (Definition \ref{defn}). Define $S:=\widetilde{X}_j$ and consider the natural morphism $p_j:X\to S$.  Note that, by the induction hypothesis, the smooth projective integral variety $S$ satisfies the very-weak-Hilbert property over $k$. Moreover,  for every $s$ in $S(k)$, the projective variety $X_s$ is naturally isomorphic to  $X_j$, and is therefore a smooth projective integral variety over $k$ satisfying the weak-Hilbert property over $k$. Thus, conditions $(1), (2), (3)$ of Theorem \ref{prop:thm} are satisfied. We conclude that $X(k)\setminus \pi(Y(k))$ is dense in $X$. 
  \end{proof}

  \begin{remark}
  Let $X$ and $Y$ be smooth projective connected varieties over a finitely generated field $k$ of characteristic zero. If $X$ and $Y$ are special   in the sense of Campana \cite{CampanaOr0}, then $X\times Y$ is special. Moreover, the conjectures of Campana and Corvaja-Zannier predict that $X$ is special if and only if there is a finite field extension $L/k$ such that $X_L$ has the weak-Hilbert property over $L$. In particular, Theorem \ref{thm3} is in accordance with the conjectures of Campana and Corvaja-Zannier as it verifies that a product of varieties with the weak-Hilbert property satisfies the very-weak-Hilbert property.
  \end{remark}
 
 We  now prove Theorem \ref{thm2}. Note that the proof is a straightforward application of  Theorem \ref{thm3} and Faltings's finiteness theorems for higher genus curves.

   \begin{proof}[Proof of Theorem \ref{thm2}] As in the statement of the theorem, we let $k$ be a finitely generated field of characteristic zero. Moreover, 
  let $E_1,\ldots, E_n$ be elliptic curves over  $k$ of positive rank over $k$.  Then, for every $i=1,\ldots, n$, the elliptic curve $E_i$ satisfies the weak-Hilbert property over $k$ by Faltings's theorem \cite{Faltings2, FaltingsComplements}.  (Indeed, it suffices to note that, if $E$ is an elliptic curve over $k$ and  $\pi:Y\to E$ is a ramified finite surjective morphism,  then the set $Y(k)$ is finite.)
Thus,   it    follows from Theorem \ref{thm3} that $E_1\times \ldots\times E_n$ satisfies the very-weak-Hilbert property over $k$.
  \end{proof}

\section{Kawamata's theorem}

To prove that the product of two elliptic curves satisfies the weak-Hilbert property, we will use Kawamata's theorem on finite covers of abelian varieties. Note that Kawamata's theorem is a generalization of Ueno's fibration theorem for closed subvarieties of abelian varieties.
\begin{theorem}[Kawamata]\label{thm:kawamata_fibn} Let $K$ be an algebraically closed field of characteristic zero, and let $A$ be an abelian variety over $K$. 
Let $X$ be a normal algebraic variety over $K$ and let $X\to A$ be a finite morphism. Then  there exist
\begin{enumerate}
\item an abelian subvariety $B$ of $A$;
\item finite \'etale Galois covers $X'\to X$  and $B'\to B$;
\item  a normal projective variety $Y$ of general type over $K$;
\item    a finite morphism $Y\to A/B$ with $A/B$ the quotient of $A$ by $B$ such that  $X'$ is a fiber bundle over $Y$ with fibers $B'$ and with translations by $B'$ as structure group
\end{enumerate} such that the following diagram
\[
\xymatrix{X' \ar[dd]_{B'-\textrm{fiber bundle}}
 \ar[rr]^{\textrm{finite étale}}   & &X \ar[rr]^{\textrm{finite}} & & A \ar[dd] \\
    & & & &  \\
  Y \ar[rrrr]_{\textrm{finite}} &  & & & A/B 
}
\]
commutes. 
\end{theorem}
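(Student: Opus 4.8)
The plan is to separate the ``abelian directions'' of the finite cover $f\colon X\to A$ from a ``general type'' part, by isolating the largest translation subgroup of $A$ along which the cover is isotrivial. First I would reduce to the case where $X$ is projective and normal (by compactifying and normalizing), so that the canonical sheaf and Kodaira dimension of $X$ are available. Then I would consider the subgroup scheme $K\subseteq A$ consisting of those $a\in A$ for which the translation $t_a\colon A\to A$ lifts to an automorphism of $X$ compatible with $f$; equivalently, $t_a^{*}(X\to A)\cong(X\to A)$ as finite $A$-schemes. One checks that $K$ is closed, and I set $B:=K^{0}_{\mathrm{red}}$, an abelian subvariety of $A$. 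By construction the cover is isotrivial in the $B$-direction.

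Second, I would build the fibration and trivialize the bundle structure. Isotriviality along $B$ means that, over $A/B$, the cover $X\to A$ becomes \'etale-locally a product of $B$ with a fixed fiber; the failure of global triviality is governed by monodromy valued in the automorphisms of a fiber together with translations of $B$. Passing to a finite \'etale Galois cover $B'\to B$ (to split the relevant torsion) and a compatible finite \'etale Galois cover $X'\to X$ (to kill the residual monodromy), I would arrange that $X'$ is a fiber bundle over a normal projective base $Y$ with fiber $B'$ and structure group the translations of $B'$. The finite morphism $f$ then descends to a finite morphism $Y\to A/B$, yielding the required commutative diagram; checking finiteness and commutativity is then formal.

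The main obstacle --- and the only genuinely deep point --- is to show that $Y$ is of general type. Here I would exploit the maximality of $B$: by construction no nontrivial translation of $A/B$ preserves the finite cover $Y\to A/B$, and I claim that this forces $\kappa(Y)=\dim Y$. When $X\subset A$ is a subvariety this is precisely Ueno's fibration theorem. In general the argument requires Kawamata's semipositivity theorem for direct images of relative dualizing sheaves, together with the subadditivity of the Kodaira dimension for fibrations whose base is dominated by an abelian variety; combining these with the characterization of abelian varieties among varieties of Kodaira dimension zero rules out any positive-dimensional abelian subvariety acting on $Y$ and upgrades the absence of translation symmetry to general type. I expect essentially all of the difficulty to reside in this last step, the construction of $B$ and the \'etale trivialization being formal once isotriviality along $B$ is established.
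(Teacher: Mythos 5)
The paper does not actually prove this statement: it is quoted from Kawamata's 1981 paper \emph{Characterization of abelian varieties}, and the ``proof'' in the text is simply the citation to Theorem~23 there. So the only meaningful comparison is between your sketch and Kawamata's own argument, and on that score your proposal is a reasonable roadmap but not a proof. The step you yourself single out as ``the only genuinely deep point''---that $Y$ is of general type---is not argued; it is deferred to ``Kawamata's semipositivity theorem'' and ``subadditivity of the Kodaira dimension for fibrations whose base is dominated by an abelian variety.'' Those are precisely the results Kawamata establishes in that same paper \emph{in order to} prove Theorem~23, so as written the argument is circular exactly where all the content lives. If the intent is to cite those results as known black boxes, then the honest form of your proof is the paper's: cite Kawamata.

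Two further points where the sketch is looser than it looks. First, for $a$ in your subgroup $K$ the translation $t_a$ lifts to an automorphism of $X$ only up to deck transformations of $X\to A$; the set of all lifts forms an extension of $B$ by a finite group rather than an action of $B$ on $X$ itself. Producing an honest $B'$-action on $X'$ with quotient $Y$ is exactly why the finite \'etale Galois covers $X'\to X$ and $B'\to B$ appear in the statement, and this is not ``formal''---it is a nontrivial group-theoretic descent step. Second, Kawamata's route is essentially dual to yours: he starts from $\kappa(X)\geq 0$ (via $K_X=R\geq 0$ from Riemann--Hurwitz for the finite morphism to $A$) and the Iitaka fibration of a suitable model, shows its general fibers are \'etale covers of a fixed abelian subvariety $B$, and only then proves the base is of general type. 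Your stabilizer-first construction is Ueno's picture for subvarieties of $A$; for ramified covers it is not clear a priori that your $B$ has the correct dimension, i.e.\ that $\dim B=\dim X-\kappa(X)$, and establishing that is again equivalent to the general-type assertion you postponed.
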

\begin{proof}
See \cite[Theorem~23]{KawamataChar}.
\end{proof}

\begin{lemma}\label{lem:kaw2} Let $k$ be a finitely generated field of characteristic zero, let $A$ be an abelian surface over $k$, and let $Y\to A$ be a finite surjective ramified morphism with $Y$ integral normal. If the Kodaira dimension of $Y$ is not two, then $Y(k)$ is not dense.
\end{lemma}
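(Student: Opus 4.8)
The plan is to apply Kawamata's structure theorem (Theorem~\ref{thm:kawamata_fibn}) to the finite morphism $Y\to A$ over $\kbar$, to analyze the resulting fibration according to $\dim B$, and then to combine the Chevalley--Weil theorem with Faltings's finiteness theorem in order to bound the rational points. Running Kawamata's theorem, I would obtain an abelian subvariety $B\subseteq A$, a finite étale Galois cover $\mu\colon X'\to Y$, an étale cover $B'\to B$, a normal projective variety $Z$ of general type, and a finite morphism $q\colon Z\to A/B$ such that $X'$ is a $B'$-fiber bundle over $Z$ via some $\psi\colon X'\to Z$, and such that the square relating $X'\to Y\to A\to A/B$ to $X'\to Z\to A/B$ commutes. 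Since $\mu$ is finite étale, $\kappa(Y)=\kappa(X')$; and since $\psi$ is a fiber bundle with abelian fibers, $\omega_{X'/Z}$ is trivial, so $K_{X'}\sim\psi^*K_Z$ and $\kappa(X')=\kappa(Z)=\dim Z=2-\dim B$.

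I would then split into cases. If $\dim B=0$ then $X'\cong Z$ has $\kappa=2$, which is excluded by hypothesis. If $\dim B=2$ then $Z$ is a point and $X'=B'$ is an abelian surface mapping to $A$ by the isogeny $B'\to B=A$; as this map and $\mu$ are both finite étale surjective, the cancellation property for étale morphisms (unramifiedness from the conormal sequence, flatness by fppf descent) would force $Y\to A$ to be étale, contradicting that it is ramified. This is exactly the Ueno--Kawamata characterization of $\kappa=0$. Hence only $\dim B=1$ survives, so that $C:=A/B$ is an elliptic curve and $Z$ is a smooth projective curve of genus $\geq 2$, with $q\colon Z\to C$ finite surjective.

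Having reduced to this case, I would descend all of these data to a finite extension of $k$; since $Y(k)\subseteq Y(L)$ for any $L/k$, it suffices to prove non-density after such an extension, so I may assume $B$, $C$, $X'$, $Z$, $\mu$, $\psi$, $q$ are defined over $k$. Writing $h\colon Y\to A\to C$ for the composition, the commutative diagram gives $h\circ\mu=q\circ\psi$ as morphisms $X'\to C$. The crucial step is to lift rational points with a \emph{uniform} field of definition: by the Chevalley--Weil theorem applied to the finite étale morphism $\mu\colon X'\to Y$, there is a single finite extension $L/k$ with $Y(k)\subseteq\mu(X'(L))$. Then for each $y\in Y(k)$, a lift $y'\in X'(L)$ satisfies $h(y)=q(\psi(y'))$ with $\psi(y')\in Z(L)$. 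Since $Z$ has genus $\geq 2$, Faltings's theorem \cite{Faltings2, FaltingsComplements} shows $Z(L)$ is finite, whence $h(Y(k))\subseteq q(Z(L))$ is finite. To conclude, I would argue by contradiction: if $Y(k)$ were Zariski-dense in the surface $Y$, then, $h$ being a surjective morphism onto the curve $C$, the image $h(Y(k))$ would be dense and hence infinite in $C$, contradicting this finiteness; therefore $Y(k)$ is not dense.

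The main obstacle, and the reason Kawamata's theorem is indispensable, is the passage from bounded-degree lifts to lifts over one fixed field $L$: only the unramifiedness of $X'\to Y$ makes Chevalley--Weil available, and without it one would obtain merely points of bounded degree on $Z$, for which Faltings gives no finiteness. A secondary subtlety worth flagging is that the fibration $\psi\colon X'\to Z$ lives only on the étale cover and need not descend to a fibration of $Y$ (the Galois group may act on $Z$ with fixed points, lowering the genus of the quotient); this is circumvented by pushing forward to $C=A/B$, a morphism defined directly on $Y$, and transporting rational points through $X'$.
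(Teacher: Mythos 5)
Your proposal is correct and follows essentially the same route as the paper: apply Kawamata's structure theorem, rule out $\kappa(Y)=0$ because that would force $Y\to A$ to be \'etale, and in the $\kappa(Y)=1$ case combine Chevalley--Weil for the finite \'etale cover with Faltings's finiteness for the genus $\geq 2$ curve it dominates. The only (harmless) difference is in the final bookkeeping: you transport $k$-points of $Y$ through $A/B$ into the finite set $q(Z(L))$, whereas the paper pushes a hypothetically dense set $Y'(M)$ directly onto the genus $\geq 2$ curve and contradicts Faltings there.
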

\begin{proof} Note that the Kodaira dimension of $Y$ is non-negative, as $Y$ admits a finite surjective morphism to an abelian variety.   If the Kodaira dimension of $Y$ is zero, then $Y\to A$ is \'etale by Kawamata's theorem (Theorem \ref{thm:kawamata_fibn}). This contradicts our assumption that $Y\to A$ is ramified. Thus, we may and do assume that the Kodaira dimension of $Y$ equals one.
Then, 
by Kawamata's theorem (Theorem \ref{thm:kawamata_fibn}),  there is a  finite field extension $L/k$ and a finite \'etale cover $Y'\to Y_L$ of the surface $Y_L$ such that $Y'$ dominates a curve $C$  over $L$ of genus at least two. By Chevalley-Weil \cite[\S 8]{JLitt},  if $Y(k)$ is dense,  then there is a finite field extension $M/k$ such that $Y'(M)$ is dense.  As $Y'\to C$ is surjective, it follows that $C(M)$ is dense. However, this contradicts Faltings's theorem \cite{FaltingsComplements} that $C(M)$ is finite. We conclude that the set $Y(k)$ is not dense in $Y$. 
\end{proof}

\begin{lemma}\label{lem:rh}
Let $A$ and $B$ be elliptic curves over $k$, and let $\pi:Y\to A\times B$ be a finite surjective morphism with $Y$ of general type. Then the branch locus of $\pi$ dominates $A$ and $B$.
\end{lemma}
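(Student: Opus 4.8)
The plan is to argue by contradiction through a fibration argument: I will show that if the branch locus $D\subset A\times B$ fails to dominate, say, $A$ under the first projection $p_A\colon A\times B\to A$, then $Y$ carries a fibration whose general fibre has genus one, and hence $Y$ cannot be of general type. Since the two factors play symmetric roles, it suffices to prove that $p_A(D)=A$; the assertion for $B$ follows by interchanging $A$ and $B$. Here $p_A(D)$ is closed (as $p_A$ is proper and $D$ is closed), so the condition ``$D$ dominates $A$'' is literally $p_A(D)=A$. We may and do assume $Y$ is normal, as in the surrounding lemmas.

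Suppose instead that $p_A(D)\neq A$, and consider the generic fibre of $f:=p_A\circ\pi\colon Y\to A$. Writing $K=k(A)$ for the function field of $A$, the generic fibre of the first projection $A\times B\to A$ is the elliptic curve $B_K$, and base change gives a finite surjective morphism $\pi_K\colon Y_K\to B_K$. The branch locus of $\pi_K$ is the flat base change $D\times_A\Spec K$; since $p_A(D)$ is a proper closed subset of $A$, it misses the generic point, so this base change is empty and $\pi_K$ is unramified. As $Y$ is normal, its generic fibre $Y_K$ is a one-dimensional normal, hence regular, scheme; being regular it is Cohen--Macaulay, so by miracle flatness the finite morphism $\pi_K\colon Y_K\to B_K$ to the regular curve $B_K$ is flat, and therefore finite étale. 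Base changing to $\overline{K}$ keeps it étale over the elliptic curve $B_{\overline{K}}$, so by Riemann--Hurwitz every connected component of $Y_{\overline{K}}$ has genus one.

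I would then globalize this. Taking the Stein factorization $Y\to C\to A$ of $f$, the smooth projective curve $C$ is finite over the elliptic curve $A$ and hence has genus at least one, while the general fibre of the fibration $Y\to C$ is a smooth genus-one curve. Passing to a resolution $\widetilde Y\to Y$ (which exists since $\characteristic k=0$ and leaves the Kodaira dimension unchanged), the easy addition theorem for the Kodaira dimension gives
\[
\kappa(Y)=\kappa(\widetilde Y)\le \dim C+\kappa(F)=1+0=1,
\]
where $F$ is the general fibre; this contradicts $Y$ being of general type. Hence $p_A(D)=A$, and symmetrically $p_B(D)=B$. The branch-locus geometry in this argument is elementary; the step that requires the most care is the Kodaira-dimension bookkeeping for the resulting genus-one fibration, namely the reduction to a smooth model and the application of the easy addition inequality $\kappa(Y)\le \dim C+\kappa(F)$ for a surface fibred over a curve.
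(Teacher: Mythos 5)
Your proof is correct, but it takes a genuinely different route from the paper's. The paper argues directly with the ramification divisor: since $K_{A\times B}=0$, Riemann--Hurwitz gives $K_Y=R$, so general type forces the pushforward $\pi_\ast R$ to be a big divisor on $A\times B$; if the branch locus failed to dominate $A$, it would be contained in $S\times B$ for a finite set $S$, which is not big --- contradiction. You instead run a fibration argument: non-dominance makes the generic fibre of $Y\to A$ finite \'etale over the elliptic curve $B_{k(A)}$ (via normality of $Y_{k(A)}$ and miracle flatness), hence a union of genus-one curves, and after Stein factorization and resolution the easy addition inequality $\kappa(\widetilde Y)\le\kappa(F)+\dim C=0+1$ caps the Kodaira dimension at $1$, contradicting general type. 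Both arguments are sound. The paper's is shorter and purely divisor-theoretic (one line of Riemann--Hurwitz plus the observation that a divisor supported on finitely many fibres of a projection is never big); yours uses standard but heavier machinery, in exchange for which it makes the geometric obstruction completely transparent ($Y$ would literally be a genus-one fibration over a curve) and needs no resolution-of-singularities bookkeeping for $\psi^\ast R$ or a pushforward-of-big-divisors step. Your opening reduction to normal $Y$ is harmless, since the lemma is only invoked in the paper for normal covers and the paper's own proof also tacitly assumes it.
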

\begin{proof} Let $\psi:\widetilde{Y}\to Y$ be a resolution of singularities, and let $E= E_1\cup \ldots \cup E_n$ be the exceptional locus. Let $R$ be the ramification divsor of $\pi:Y\to A\times B$. Then, by Riemann-Hurwitz, we have that 
\[K_Y = \pi^\ast K_{A\times B} +R = R, \quad
K_{\widetilde{Y}} = \psi^\ast R + \sum a_i E_i.
\] As the canonical divisor $K_{\widetilde{Y}}$ is big on $\widetilde{Y}$ (as $\widetilde{Y}$ is of general type), we see that $\pi_\ast R$ is big on $A\times B$. Now, assume that the branch locus of $\pi$ does not dominate $A$. Then, the big divisor $\pi_\ast R$ is contained in $S\times B$ with $S$ a finite closed subset of $E_1$. However, as $S\times B$ is not big, this contradicts the bigness of $\pi_\ast R$. We conclude that the branch locus of $\pi$ dominates $A$ (hence also $B$ by symmetry).
\end{proof}

\begin{proof}[Proof of Theorem \ref{thm33}]   
Define $X:= E_1\times E_2$ and $S:=E_1$. Let $f:X\to S$ be the projection map.
For $i=1,\ldots,n $,
 let $Y_i$ be an  integral   normal variety over $k$   and let 
$\pi_i:Y_i\to X$ be a finite surjective ramified morphism.  It suffices to show that $X(k) \setminus \cup_{i=1}^n \pi_i(Y_i(k))$ is dense in $X$. To this end, let us first note that $X(k)$ is dense in $X$ (as $E_1(k)$ and $E_2(k)$ have positive rank). Now,  if $Y_i$ has Kodaira dimension $<2$, then $Y_i(k)$ is not dense (Lemma \ref{lem:kaw2}), so that we may discard such $Y_i$ from the collection of coverings $\pi_i:Y_i\to X$. That is, we may and do assume that, for $i=1,\ldots, n$, the variety $Y_i$ is of general type.  Moreover, if $Y_i\to T_i\to E_1$ is the Stein factorization of the composed morphism $Y_i\to E_1\times E_2\to E_1$ and $T_i\to E_1$ is ramified, then $Y_i(k)$ is not dense in $Y$, as $T_i(k)$ is finite by Faltings's finiteness theorem  \cite{Faltings2, FaltingsComplements}. Therefore, we may also discard such morphisms $\pi_i:Y_i\to X$ from the collection of coverings $\pi_i:Y_i\to X$.
Thus, for $i=1,\ldots, n$, the morphism $T_i\to S$ is finite unramified, hence \'etale.  Moreover, as $Y_i$ is of general type,  
by Lemma \ref{lem:rh}, for $i=1,\ldots, n$, the branch locus of $\pi_{i}$ dominates $S:=E_1$.  We now argue similarly as in the end of the proof of   Theorem \ref{prop:thm}.

Let $U\subset S$ be a dense open subset such that, for every $s$ in $U$, the scheme $Y_s$ is normal. 
For $s\in U(k)$, consider the finite surjective morphism $\pi_{i,s}:Y_{i,s}\to X_s$. Let $\{t_{i,1},\ldots, t_{i,r_i}\} = \psi_i^{-1}\{s\}$. Then $Y_{i,s} = Y_{t_{i,1}}\sqcup \ldots \sqcup Y_{t_{i,r_i}}$ with $Y_{t_{i,1}}, \ldots, Y_{t_{i,r_i}}$ integral normal varieties over $k$. Moreover, for every $i=1,\ldots, n$, every $s\in U(k)$, and every integer $1\leq j \leq  r_i$, by Lemma \ref{lemmatje}, the  induced finite surjective morphism $\pi_{i,s,j}:Y_{t_{i,j}}\to X_s$ is ramified (as the branch locus of $Y_i\to X$ dominates $S$, so that the branch locus of $Y_{i,T}\to X_T$ dominates $T$).   Therefore, 
since $X_s = E_2$ satisfies the weak-Hilbert property over $k$ (by assumption), it follows that 
\[
X_s(k)\setminus \cup_{i=1}^n\cup_{j=1}^{r_i} \pi_{i,s,j}(Y_{i,t_j}(k)) = X_s(k) \setminus \cup_{i=1}^n\pi_{i,s}(Y_{i,s}(k))
\] is dense in $X_s$. Note that, for every $s$ in $U(k)$, the set $X(k)\setminus \cup_{i=1}^n\pi_i(Y_i(k))$ contains  the set
\[
 X_s(k) \setminus \cup_{i=1}^n\pi_{i,s}(Y_{i,s}(k)).\]
Since $S(k) = E_1(k)$ is dense in $E_1$, we have that $U(k)$ is dense in $E_1$, so that    $$X(k)\setminus \cup_{i=1}^n\pi_i(Y_i(k))$$ is dense in $X$.
\end{proof}

 \bibliography{refsci}{}
\bibliographystyle{alpha}

\end{document}